 \numberwithin{equation}{section}
 \newtheorem{theorem}{Theorem}[section]
 \newtheorem{proposition}[theorem]{Proposition}
 \newtheorem{corollary}[theorem]{Corollary}
 \title[Remark on quasi Sasakian structures]{Remark on quasi Sasakian structures}
 \author[Emmanuel Gnandi, Fortun\'e Massamba ]{Emmanuel Gnandi*,  Fortun\'e Massamba**}
 \newcommand{\acr}{\newline\indent}
 \address{\llap{*\,}INSA de Toulouse\acr
 	Département de G\'enie Mathématique\acr
 	Université de Toulouse\acr
 	135 avenue de Rangueil\acr
 	31077 Toulouse cedex 4\acr
 	 France} 
 \email{kpanteemmanuel@gmail.com, gnandi@insa-toulouse.fr}
 \thanks{}
 \address{\llap{**\,}Discipline of Mathematics\acr 
 	School of Agriculture and Science\acr
 	University of KwaZulu-Natal\acr
 	Private Bag X01, Scottsville 3209\acr 
 	South Africa}  
 \email{massfort@yahoo.fr, Massamba@ukzn.ac.za}  
 \thanks{}  
 \subjclass[2020]{Primary 53C25; Secondary 53D15}
 \dedicatory{}
 \keywords{Sasakian structure, Quasi-Sasakian structure, Co-K\"ahler structure.} 
\begin{document}
 	\begin{abstract}  
 	In this work, we revisit quasi-Sasakian geometry in dimension three and examine how these structures interact with the foliation generated by the Reeb vector field and its basic cohomology. Through a deformation-based approach, we show that a closed, orientable $3$-manifold admits a quasi-Sasakian structure precisely when it is either Sasakian or arises as a K\"ahler mapping torus. In particular, every quasi-Sasakian structure in this setting can be deformed into a Sasakian or a co-K\"ahler one. This result leads to a complete classification of quasi-Sasakian manifolds in dimension three and highlights the geometric and topological features that distinguish the two cases.
 	\end{abstract} 

 	\maketitle

 	\section{Introduction}

 	Quasi-Sasakian manifolds arise as an important generalization of Sasakian and co-K\"ahler geometry within the broader theory of almost contact metric structures. Given an almost contact metric structure $(\phi,\xi,\eta,g)$ on a differentiable manifold $M$, the structure is called \emph{quasi-Sasakian} when it is normal and the associated second fundamental form $\Omega$ is closed. This notion, introduced by Blair in \cite{blair1967quasi}, simultaneously extends the classical Sasakian case  where $d\eta = \Omega$ and the co-K\"ahler case where $d\eta = 0$. Moreover, quasi-Sasakian structures are known to have necessarily odd rank \cite{blair1966thesis}, which places strong geometric restrictions on the underlying manifold.
 	
 	The study of quasi-Sasakian geometry has developed significantly over the past decades, motivated both by intrinsic geometric interest and by applications in CR-geometry and theoretical physics, particularly in magnetic theory and supergravity (see \cite{calin1992contact, friedrich2001almost, rustanov1994geometry} for more details and references therein). Important contributions to the subject include the works of Blair \cite{blair1966thesis, blair1967quasi}, Kanemaki \cite{kanemaki1977quasi, kanemaki1984quasi}, Oubi\~na \cite{oubina1985new}, Chinea and Gonz\'alez \cite{gonzalezchinea1989quasi}, Tanno \cite{tanno1971quasi}, Kirichenko and Rustanov \cite{kirichenko2002geometry}, and Olszak \cite{olszak1982curvature}. In particular, the $3$-dimensional case has received special attention in the literature, for instance in \cite{olszak1986normal, olszak1996conformally}, where the geometry is closely tied to the behavior of the Reeb vector field and the foliation it generates.
 	
 	In dimension $3$, a quasi-Sasakian structure determines a natural decomposition of the tangent bundle into the Reeb direction and a transverse K\"ahler plane distribution. This viewpoint leads naturally to the study of the basic cohomology associated with the Reeb foliation, which encodes essential global information about the manifold. Understanding how the transverse Kähler geometry interacts with the Reeb flow plays a key role in distinguishing Sasakian and co-K\"ahler situations.
 	
 	The objective of this work is to study quasi-Sasakian structures on closed, oriented $3$-dimensional manifolds and to obtain a classification in terms of Sasakian manifolds and Kähler mapping tori. Our approach makes use of basic cohomology techniques together with deformation arguments that allow us to pass from quasi-Sasakian structures to contact or cosymplectic structures in a controlled manner.
 	
 	This paper is organized as follows. In Section~\ref{sec:basics}, we review the fundamental concepts of almost contact metric, cosymplectic, and quasi-Sasakian geometry, placing particular emphasis on the $3$-dimensional case and the associated Reeb foliation. Section~\ref{sec:Result} contains the main results and their proofs, together with the geometric implications of the classification theorem.

 	\section{Quasi Sasakian structure: Basic Notions}\label{sec:basics}
 	
 	This section introduces the fundamental structure studied throughout this work. For a comprehensive background and a collection of illustrative examples, we refer the reader to the following sources: \cite{blair1967quasi, blair2010riemannian,  cappelletti2008$3$, cappelletti2009geometry, desarkar20093, kanemaki1977quasi, kirichenko2002geometry,  olszak1982curvature,   olszak1986normal, olszak1996conformally, rustanov1994geometry, tanno1971quasi} .
 	
 	Let $M$ be a $(2n + 1)$-dimensional manifold endowed with an almost contact structure $(\phi, \xi, \eta)$, i.e., $\phi$ is a tensor field of type $(1, 1)$, $\xi$ is a vector field called Reeb vector field, and $\eta$ is a $1$-form satisfying (see \cite{blair2010riemannian}, for more details)
 	\begin{equation} \label{eq:almost_contact}
 		\phi^2 = -\mathbb{I} + \eta \otimes \xi, \quad \eta(\xi) = 1.
 	\end{equation}   
 	It follows that $\phi\xi = 0$, $\eta \circ \phi = 0$ and $\operatorname{rank}(\phi) = 2n$. Then $(\phi, \xi, \eta, g)$ is called an almost contact metric structure on $M$ if $(\phi, \xi, \eta)$ is an almost contact structure on $M$ and $g$ is a Riemannian metric on $M$ such that (see \cite{blair2010riemannian})   
 	\begin{equation} \label{eq:metric_condition}
 		g(\phi X, \phi Y) = g(X, Y) - \eta(X)\eta(Y),
 	\end{equation}   
 	for any vector fields $X$, $Y$ on $M$.
 	
 	It follows that the $(1, 1)$-tensor field $\phi$ is skew-symmetric and $\eta(\cdot) = g(\xi, \cdot)$. The second fundamental form of $M$ is defined by   
 	\begin{equation} \label{eq:fundamental_form}
 		\Omega(X, Y) = g(X, \phi Y).
 	\end{equation}   	
 	We say that the $1$-form $\eta$ has rank $s=2p$ if $(d\eta)^{p}\ne0$ and $\eta\wedge(d\eta)^{p}\ne0$ and has rank $s=2p+1$ if $\eta\wedge(d\eta)^{p}\ne0$ and $(d\eta)^{p+1}=0$. We say that $s$ is the rank of the almost contact metric structure (see \cite{blair1967quasi, blair2010riemannian}).
 	
 	In what follows, we define the subsequent structures in accordance with the presentations given in \cite{bazzoni2015k, cappelletti2013survey, li2008topology, libermann1962quelques}.
 	
 	The quadruplet $(\phi, \xi, \eta, g)$ is said to be a cosymplectic structure if the forms $\eta$ and $\Omega$ are closed, that is, 
 	$$
 	d\eta = 0 \quad \text{and} \quad d\Omega = 0,
 	$$ 
 	where $d$ denotes the exterior derivative (see \cite{li2008topology, libermann1962quelques}).
 	
 	If $(\phi, \xi, \eta, g)$ is a cosymplectic structure  $(\phi, \xi, \eta)$ is normal, then the quadruplet $(\phi, \xi, \eta, g)$ is called a co-K\"ahler structure (see \cite{cappelletti2013survey, li2008topology}). 
 	
 	The normality condition states that the torsion tensor $N$ vanishes, i.e.,
 	\begin{equation}
 		N := [\phi, \phi] + 2\,d\eta \otimes \xi = 0,
 		\label{eq:torsion}
 	\end{equation}
 	where $[\phi, \phi]$ is the Nijenhuis torsion of $\phi$, defined by
 	$$
 	[\phi, \phi](X, Y) = \phi^2[X, Y] + [\phi X, \phi Y] - \phi[\phi X, Y] - \phi[X, \phi Y],
 	$$
 	for any vector fields $X$ and $Y$ on $M$. Note that the normality condition is equivalently to says that obvious induced almost complex structure on $M\times \mathbb{R}$ is integrable (see \cite{sasaki1962differentiable}).
 	
 	In the case $d\eta=\Omega$ the quadruplet $(\phi, \xi, \eta, g)$ is called a contact metric
 	structure. A normal contact structure is called a Sasakian structure (see \cite{blair2010riemannian} for more details and references therein).
 	
 	An almost contact metric manifold $M$, endowed with an almost contact metric structure $(\phi, \xi, \eta, g)$, is called \textit{quasi-Sasakian} if the structure is normal and its second fundamental form $\Omega$ is closed. 
 	
 	The concept of quasi-Sasakian structures was introduced by Blair, as a generalization of Sasakian and co-K\"ahler structures. Blair also provided their first examples (see \cite{blair1966thesis}). The rank of a quasi-Sasakian structure is always odd \cite{blair1966thesis}, it is equal to $1$ if the structure is co-K\"ahler and $2n+1$ is the structure is Sasakian.
 	
 	In $3$-dimensional case, the condition $\eta \wedge \Omega \ne 0$ is satisfied and  ensures that $\Omega$ is nowhere vanishing. Consequently, departing on condition 
 	$$
 	d\eta(\xi,.)=0
 	$$ 
 	which is satisfied by any normal almost contact metric structure, we deduce that there exists a smooth function $\tau : M \to \mathbb{R}$ such that
 	$$
 	d\eta = \tau \,\Omega.
 	$$   
 	Remenber that the Reeb vector field $\xi$ associated with $(\eta,\Omega)$ is uniquely defined by the relations (see \cite{blair2010riemannian})
 	$$
 	\Omega(\xi,\cdot)=0, \;\;  \eta(\xi)=1.
 	$$
 	A direct consequence of the definition is the invariance of the structure under the Reeb flow:
 	\begin{equation} \label{eq:invariance}
 		\mathcal{L}_{\xi}\eta = 0, \;\; 
 		\mathcal{L}_{\xi}\Omega = 0,\;\; 
 		\mathcal{L}_{\xi}(\eta\wedge \Omega) = 0 ,\;\;
 		\mathcal{L}_{\xi}\tau = 0,
 	\end{equation}  
 	where $\mathcal{L}$ denotes the Lie derivative.
 	
 	The kernel of $\Omega$ generated by $\xi$ determines a one-dimensional foliation $\mathcal{F}_\Omega= \mathbb{R}\xi$. The condition $\eta\wedge\Omega\ne0$ implies that $\eta$ does not vanish along $\mathcal{F}_\Omega$, that the hyperplane field $\mathcal{F}_\eta = \ker \eta$ is everywhere transverse to $\mathcal{F}_\Omega$, and that $\Omega$ is nondegenerate on $\mathcal{F}_\eta$. Thus, a quasi-Sasakian structure on a $3$-dimensional manifold induces a canonical splitting
 	$$
 	TM = \mathcal{F}_\Omega \oplus \mathcal{F}_\eta,
 	$$
 	where $\oplus$ is the orthogonal sum, $\mathcal{F}_\Omega$ is an oriented line bundle and $\mathcal{F}_\eta$ is a K\"ahler $2$-plane bundle.

 	\section{Main Results} \label{sec:Result}
 	
 	Let $(M, \phi, \xi, \eta, g)$ be a closed, orientable $3$-manifold equipped with a quasi-Sasakian structure $(\phi, \xi, \eta, g)$ and the second fundamental form $\Omega$. Let $\mathcal{F}_\Omega$ denote the foliation induced by the associated Reeb vector field $\xi$. 
 	
 	A $p$-form $\beta$ is said to be basic with respect to $\mathcal{F}_\Omega$ if it satisfies
 	$$
 	\iota_{\xi} \beta = 0 \quad \text{and} \quad \iota_{\xi} d\beta = 0 .
 	$$
 	We denote by $\Omega^{*}(\mathcal{F}_\Omega)$ the exterior differential complex of basic forms. The cohomology of this complex, denoted by $H_B^{*}(\mathcal{F}_\Omega)$, is called the basic cohomology of the flow. The usual de Rham cohomology of $M$ is denoted by $H^{*}(M)$. Note that, from El Kacimi-Alaoui, Sergiescu and Hector \cite[Th\'eor\`eme 0]{kacimi1985cohomologie}, we remark that the basic cohomology of the foliation $\mathcal{F}_\Omega$ on a compact quasi-Sasakian manifold is always finite-dimensional. 	
 	\begin{theorem}\label{thm:mains}
 	A closed, orientable $3$-dimensional manifold admits a quasi-Sasakian structure if and only if it admits either a Sasakian structure or the structure of a K\"ahler mapping torus.
 	\end{theorem}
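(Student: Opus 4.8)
The plan is to prove the two implications separately, with essentially all of the work going into the forward one. For the backward direction I would argue directly: a Sasakian structure satisfies $d\eta=\Omega$, hence $d\Omega=d^{2}\eta=0$, and is normal by definition, so it is quasi-Sasakian; and a K\"ahler mapping torus carries a natural co-K\"ahler structure (the transverse K\"ahler form and complex structure descend from the fiber, with $\eta$ the pullback of $dt$; see \cite{li2008topology}), and a co-K\"ahler structure is trivially quasi-Sasakian because then $d\eta=0=0\cdot\Omega$.

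For the forward direction I would start from a quasi-Sasakian structure $(\phi,\xi,\eta,g)$ on a closed orientable $3$-manifold $M$ and use the facts from Section~\ref{sec:basics}: $d\eta=\tau\,\Omega$ with $\xi(\tau)=0$, $d\Omega=0$, and --- since normality forces the remaining normality tensors to vanish --- $\mathcal{L}_{\xi}\phi=0$ in addition to the invariances \eqref{eq:invariance}. Writing $g=\eta\otimes\eta-\Omega(\cdot,\phi\cdot)$ shows $\mathcal{L}_{\xi}g=0$, so $\xi$ is a unit Killing field, $\mathcal{F}_{\Omega}$ is an isometric flow on $M$ with $\nabla_{\xi}\xi=0$, hence taut. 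Then, invoking \cite{kacimi1985cohomologie} together with basic Hodge theory and basic Poincar\'e duality for taut, transversely oriented Riemannian foliations, $H_{B}^{*}(\mathcal{F}_{\Omega})$ is finite-dimensional and $H_{B}^{2}(\mathcal{F}_{\Omega})\cong\mathbb{R}$, generated by $[\Omega]_{B}$; this class is nonzero, for if $\Omega=d\beta$ with $\beta$ basic then Stokes gives $\int_{M}\eta\wedge\Omega=\int_{M}d\eta\wedge\beta=\int_{M}\tau\,(\Omega\wedge\beta)=0$ because $\Omega\wedge\beta$ is a basic $3$-form on a $3$-manifold and therefore vanishes, contradicting that $\eta\wedge\Omega$ is a volume form.

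Next I would exploit this to put the structure in normal form by a controlled deformation. Since $H^{2}_{B}$ is one-dimensional, $[\tau\Omega]_{B}=\lambda\,[\Omega]_{B}$ for a unique $\lambda\in\mathbb{R}$, so $(\tau-\lambda)\Omega=d\gamma$ for some basic $1$-form $\gamma$; set $\eta':=\eta-\gamma$. Then $\eta'(\xi)=1$, $\ker\eta'$ is still transverse to $\xi$ with $\Omega$ nondegenerate on it, and $d\eta'=\lambda\,\Omega$. Keeping the unchanged transverse K\"ahler data I would reassemble a compatible almost contact metric structure $(\phi',\xi,\eta',g')$; because its transverse complex structure is integrable and $\xi$-invariant, $\mathcal{L}_{\xi}\eta'=0$, and $d\eta'=\lambda\Omega$ has transverse type $(1,1)$, a direct computation with the normality tensors of Section~\ref{sec:basics} should confirm that $(\phi',\xi,\eta',g')$ is again normal, hence (as $\Omega$ is still closed) quasi-Sasakian with the constant proportionality function $\lambda$.

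Finally I would split into cases. If $\lambda=0$ then $d\eta'=0=d\Omega$, so $(\phi',\xi,\eta',g')$ is cosymplectic and normal, i.e.\ co-K\"ahler, and by the structure theorem for compact co-K\"ahler $3$-manifolds \cite{li2008topology} $M$ is a K\"ahler mapping torus. If $\lambda\neq 0$, then replacing $\phi'$ by $-\phi'$ if necessary (which changes neither $\xi,\eta'$ nor the normality tensors and flips the sign of $\lambda$) I may assume $\lambda>0$; the $\mathcal{D}$-homothetic change rescaling the metric by $\lambda$ on $\ker\eta'$ while keeping it $1$ along $\xi$ yields a compatible metric $\hat g$ with fundamental form $\hat\Omega=\lambda\Omega=d\eta'$, so $(\phi',\xi,\eta',\hat g)$ is a normal contact metric structure, i.e.\ Sasakian. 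The two steps I expect to be the main obstacles are verifying that the deformation $\eta\mapsto\eta-\gamma$ preserves normality --- a direct but somewhat long computation with the transverse holomorphic data --- and the use of basic Poincar\'e duality to identify $H_{B}^{2}(\mathcal{F}_{\Omega})$, which relies on the tautness of the Reeb flow.
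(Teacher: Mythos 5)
Your proposal is correct and follows essentially the same route as the paper: show the Reeb flow is a Killing (hence Riemannian, taut) foliation, identify $H_B^2(\mathcal{F}_\Omega)\cong\mathbb{R}$ generated by $[\Omega]$, write $d\eta$ as a constant multiple of $\Omega$ plus a basic exact term, deform $\eta$ by the basic primitive, and split into the cases $\lambda=0$ (co-K\"ahler, hence a K\"ahler mapping torus by Li) and $\lambda\neq 0$ (Sasakian), with the same converse. The only differences are cosmetic: you close the normality step by a direct verification plus a D-homothetic rescaling where the paper cites Bazzoni--Goertsches, Goldberg--Yano, and the fact that $3$-dimensional $K$-contact structures are Sasakian, and your explicit Stokes argument that $[\Omega]_B\neq 0$ (and your handling of the normalization $r=1$) fills in points the paper leaves implicit.
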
 	
 	\begin{proof} 
 	Let $(M, \phi, \xi, \eta, g)$ be a closed, orientable $3$-dimensional manifold endowed with a quasi-Sasakian structure $(\phi, \xi, \eta, g)$ and the second fundamental form $\Omega$. The foliation
 		$$
 		\mathcal{F}_\Omega = \ker(\Omega),
 		$$
 		generated by the Reeb vector field $\xi$, is a Killing foliation, and therefore a  Riemannian foliation (see \cite{blair1966thesis, tanno1971quasi}). By the theorem of El~Kacimi-Alaoui, Sergiescu and Hector \cite[Th\'eor\`eme~0]{kacimi1985cohomologie}, the second basic cohomology group $H_B^2(\mathcal{F}_\Omega)$ is either $0$ or $\mathbb{R}$. Moreover, by Molino and Sergiescu \cite[Theorem A]{molino1985deux} and Tondeur \cite[Theorem ~10.17]{tondeur1997geometry}, we have
 		$$
 		H_B^2(\mathcal{F}_\Omega) \cong \mathbb{R}.
 		$$
 		Since $\Omega$ is closed, it defines a basic cohomology class, and hence
 		$$
 		H_B^2(\mathcal{F}_\Omega) = \langle [\Omega] \rangle .
 		$$ 		
 		From Blair \cite{blair1966thesis} and Tanno \cite{tanno1971quasi}, we know that $d\eta(\xi, \cdot) = 0$, so the $2$-form $d\eta$ is basic. Therefore, there exist a constant $r \in \mathbb{R}$ and a basic $1$-form $\alpha$ such that
 		\begin{equation}\label{DifferenETaOmega1}
 		d\eta = r\,\Omega + d\alpha .
 		\end{equation}	
 		\paragraph{Case 1: $r = 0$.}
 		Then $d\eta = d\alpha$, and there exists a $1$-form
 		$$
 		\widetilde{\eta} := \eta - \alpha
 		$$
 		satisfies 
 		$$
 		d\widetilde{\eta} = 0.
 		$$ 		
 		Moreover, since $\iota_{\xi}\alpha = 0$, we have
 		$$
 		\widetilde{\eta} \wedge \Omega = \eta \wedge \Omega,
 		$$
 		which is a volume form. Hence $(M, \Omega, \widetilde{\eta})$ is cosymplectic with Reeb vector field $\xi$. Because $\xi$ is Killing, it follows from \cite{bazzoni2015k} that $(M, \Omega, \widetilde{\eta})$ is a $K$-cosymplectic structure. By Bazzoni and Goertsches \cite[Prop.~2.8]{bazzoni2015k}, $M$ carries a $K$-cosymplectic structure $(\widetilde{\eta}, \xi, \phi, h)$ with $\mathcal{L}_{\xi} h = 0$. Goldberg’s theorem \cite[Proposition~3]{goldberg1969integrability} implies that this structure is normal, hence co-K\"ahler. Therefore, $M$ is a K\"ahler mapping torus as proved by Li in \cite[Theorem~2]{li2008topology}. \\
 		
 		\paragraph{Case 2: $r \neq 0$.}
 		Without loss of generality, we may assume $r = 1$. From (\ref{DifferenETaOmega1}), one has
 		$$
 		\Omega = d(\eta - \alpha),
 		$$
 		Since $\alpha \wedge \Omega = 0$ and letting $\widetilde{\eta} := \eta - \alpha$, we obtain
 		$$
 		\widetilde{\eta} \wedge d\widetilde{\eta} = \eta \wedge \Omega .
 		$$
 		Thus $(M, \widetilde{\eta})$ is a $K$-contact manifold, satisfying
 		$$
 		\iota_{\xi}\widetilde{\eta} = 1, \qquad \iota_{\xi} d\widetilde{\eta} = 0 .
 		$$
 		In dimension $3$, every $K$-contact structure is Sasakian. Hence $(M, \widetilde{\eta})$ is a  Sasakian manifold. 
 		
 		Conversely, as recalled in Section~\ref{sec:basics}, a Sasakian structure is a special case of a quasi-Sasakian structure. Moreover, by Li \cite{li2008topology} (Theorem~2), any K\"ahler mapping torus admits a co-K\"ahler structure, which is also a special case of a quasi-Sasakian structure. This completes the proof.
 	\end{proof}

 	The following corollaries are direct consequences of the Theorem \ref{thm:mains}.
 	
 	\begin{corollary}
 		Any quasi-Sasakian structure $(\phi, \xi, \eta, g)$, with the second fundamental form $\Omega$, on a closed, orientable $3$-dimensional manifold $M$ can be deformed either into a contact structure
 		$$
 		\widetilde{\eta} = \pm \frac{1}{|\Omega|} *\Omega - \alpha,
 		$$
 		or into a co-K\"ahler structure $(\widetilde{\eta}, \Omega)$, with
 		$$
 		\widetilde{\eta} = c\, *\Omega - \alpha,
 		$$
 		for some constant $c \in \mathbb{R}$ and a basic $1$-form $\alpha$, where $*$ is the Hodge operator and $|\Omega|$ represents the norm of $\Omega$ as defined in \cite{blairgoldberg1967topology}.
 	\end{corollary}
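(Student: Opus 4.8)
The plan is to read off the explicit deformations directly from the proof of Theorem~\ref{thm:mains}; the only genuinely new ingredient is the pointwise relation between $\Omega$ and $\eta$ supplied by the Hodge operator of $g$ in dimension three. Recall from that proof that one may write
$$
d\eta = r\,\Omega + d\alpha
$$
with $r\in\mathbb{R}$ a constant and $\alpha$ a basic $1$-form, and that the deformed $1$-form $\widetilde{\eta} := \eta-\alpha$ satisfies $\iota_\xi\widetilde{\eta} = 1$, $\iota_\xi d\widetilde{\eta} = 0$, and $\widetilde{\eta}\wedge\Omega = \eta\wedge\Omega$; when $r\ne 0$ (normalized to $r=1$) it is a contact form underlying, by Theorem~\ref{thm:mains}, a Sasakian structure, and when $r=0$ it is closed and $(M,\Omega,\widetilde{\eta})$ is the co-K\"ahler structure produced there.

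First I would establish the Hodge identity. Using the canonical splitting $TM = \mathcal{F}_\Omega\oplus\mathcal{F}_\eta$, orient $M$ by the nowhere-vanishing volume form $\eta\wedge\Omega$ (legitimate since $M$ is orientable and $\eta\wedge\Omega\ne 0$ for a quasi-Sasakian structure in dimension three). In a local $g$-orthonormal coframe $\{e^1,e^2,\eta\}$ adapted to this splitting one has $\Omega = \pm\,e^1\wedge e^2$, whence a direct computation yields $|\Omega|_g\equiv 1$ and $*\Omega = \eta$ (and $*\Omega = -\eta$ for the opposite orientation). Consequently
$$
\eta = \pm\,\frac{1}{|\Omega|}\,*\Omega .
$$

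Substituting this identity into $\widetilde{\eta} = \eta-\alpha$ gives, in the case $r\ne 0$, the contact form $\widetilde{\eta} = \pm\frac{1}{|\Omega|}*\Omega - \alpha$, which by Theorem~\ref{thm:mains} is the contact form of a Sasakian structure; and in the case $r=0$, the closed form $\widetilde{\eta} = \pm\frac{1}{|\Omega|}*\Omega - \alpha$, which is of the asserted shape $\widetilde{\eta} = c\,*\Omega - \alpha$ (with $c=\pm 1/|\Omega|$ for the structure metric, the freedom of an arbitrary nonzero constant $c$ coming from the possibility of rescaling the transverse symplectic form of the co-K\"ahler $3$-manifold) and which, paired with $\Omega$, is the co-K\"ahler structure of Theorem~\ref{thm:mains}. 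The main---and essentially the only---obstacle I anticipate is careful bookkeeping of orientations and signs: one must fix the orientation so that the sign in $\pm\frac{1}{|\Omega|}*\Omega$ is compatible with $\iota_\xi\widetilde{\eta} = 1$, which is exactly where basicity of $\alpha$ (so that $\iota_\xi\widetilde{\eta} = \iota_\xi\eta = 1$) and the adapted-coframe computation of $*\Omega$ enter.
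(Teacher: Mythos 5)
Your proposal is correct and follows essentially the same route as the paper: deform via Theorem~\ref{thm:mains} to $\widetilde{\eta}=\eta-\alpha$ and then rewrite $\eta=\pm\frac{1}{|\Omega|}*\Omega$ to obtain both displayed formulas. The only difference is that you verify the pointwise Hodge-star identity directly in an adapted orthonormal coframe, whereas the paper simply cites Blair--Goldberg (Proposition~2.12 and Corollary~2.14 of \cite{blairgoldberg1967topology}) for it.
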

 	
 	\begin{proof}
 		From Theorem~\ref{thm:mains}, any quasi-Sasakian structure $(\phi, \xi, \eta, g)$ with the second fundamental form $\Omega$ can be deformed into a the two types of quasi-Sasakian structures, namely Sasakian and co-K\"ahler structures. In particular, there exists a $1$-form $\widetilde{\eta}$ of the form
 		$$
 		\widetilde{\eta} = \eta - \alpha,
 		$$
 		for some basic $1$-form $\alpha$.
 		
 		Using \cite[Proposition 2.12]{blairgoldberg1967topology}, one can further write
 		$$
 		\widetilde{\eta} = \pm \frac{1}{|\Omega|} *\Omega - \alpha,
 		$$
 		where $\Omega = d\widetilde{\eta}$, giving a contact structure.  
 		
 		Alternatively, the structure can be deformed into a co-K\"ahler structure $(\widetilde{\eta}, \Omega)$, with
 		$$
 		\widetilde{\eta} = c\, *\Omega - \alpha,
 		$$
 		for some constant $c \in \mathbb{R}$ (see \cite[Corollary 2.14]{blairgoldberg1967topology}).
 	\end{proof}
 	
 	A natural and important question is to determine, for a given manifold, which of the two situations occurs. Any quasi-Sasakian manifold whose Betti numbers are odd is necessarily a Kähler mapping torus. Indeed, the first Betti number of a Sasakian manifold is always either zero or even (see \cite{blairgoldberg1967topology, tachibana1965harmonic}. Moreover, recall that the first Betti number of a K\"ahler mapping torus is odd (see \cite[Theorem 11]{chinea1993topology}). By Poincar\'e duality, it follows that all Betti numbers of such a manifold are odd. Consequently, a quasi-Sasakian manifold whose first Betti number is even cannot be a K\"ahler mapping torus and must therefore be Sasakian.
 	
 	Therefore, we now provide a classification of $3$-dimensional Sasakian and co-Kähler manifolds.
 	
 	\begin{proposition}
 		\label{thm:cokha}
 		Let $M$ be a closed, orientable $3$-dimensional manifold. Then $M$ admits the structure of a K\"ahler mapping torus if and only if $M$ is diffeomorphic to one of the following:
 		\begin{enumerate}
 			\item $\mathbb{S}^2 \times \mathbb{S}^1$; 			
 			\item a torus bundle $\mathbb{T}^2_A$ over $\mathbb{S}^1$ with monodromy
 			$$
 			A \in \left\{
 			\begin{pmatrix} 1 & 0 \\ 0 & 1 \end{pmatrix},
 			\begin{pmatrix} 0 & 1 \\ -1 & 0 \end{pmatrix},
 			\begin{pmatrix} -1 & 0 \\ 0 & -1 \end{pmatrix},
 			\begin{pmatrix} 0 & -1 \\ 1 & 1 \end{pmatrix},
 			\begin{pmatrix} -1 & -1 \\ 1 & 0 \end{pmatrix}
 			\right\};
 			$$
 			
 			\item a quotient $(\mathbb{H}^2 \times \mathbb{R}) / \Xi$, where 
 			$\Xi \subset \mathrm{Isom}(\mathbb{H}^2 \times \mathbb{R})$ is a discrete subgroup acting freely.
 		\end{enumerate}  		
 	\end{proposition}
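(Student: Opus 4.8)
The plan is to combine Li's structural description of compact co-K\"ahler manifolds with the uniformization of surfaces. By Theorem~\ref{thm:mains} and Li \cite[Theorem~2]{li2008topology}, a closed orientable $3$-manifold admits a K\"ahler mapping torus structure exactly when it is the mapping torus $K_\varphi$ of a Hermitian isometry $\varphi$ of a compact K\"ahler manifold $K$; since $\dim_{\mathbb{R}}K=2$, here $K=\Sigma_g$ is a closed oriented surface of some genus $g$ carrying a K\"ahler (equivalently compatible Riemannian) metric $h$, and because an orientation-preserving isometry of a Riemann surface is automatically biholomorphic, ``Hermitian isometry'' here means simply ``orientation-preserving isometry of $(\Sigma_g,h)$''. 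The first step is to replace $h$ by the constant-curvature metric in its conformal class: since $\varphi$ is holomorphic it is also an isometry of the uniformizing metric, so without loss of generality $(\Sigma_g,h)$ is the round sphere ($g=0$), a flat torus ($g=1$), or a closed hyperbolic surface ($g\ge 2$), and $\varphi\in\mathrm{Isom}^+(\Sigma_g,h)$.

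I would then carry out the case analysis on $g$. For $g=0$, $\mathrm{Isom}^+(\mathbb{S}^2)=SO(3)$ is connected, so $\varphi$ is isotopic to the identity and $K_\varphi\cong\mathbb{S}^2\times\mathbb{S}^1$. For $g=1$, writing $\mathbb{T}^2=\mathbb{R}^2/\Lambda$, an isometry has the affine form $\varphi(x)=Ax+v$ with $A$ preserving $\Lambda$ and the flat inner product; preserving $\Lambda$ puts $A$, after a basis change, in $GL(2,\mathbb{Z})$, orientation-preservation forces $A\in SL(2,\mathbb{Z})$, and being orthogonal for some inner product forces the eigenvalues of $A$ onto the unit circle with $A$ diagonalizable, hence $\mathrm{tr}\,A\in\{-2,-1,0,1,2\}$ and $A$ of finite order. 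The translation part is absorbed by the isotopy $\varphi_s(x)=Ax+sv$, so $K_\varphi\cong\mathbb{T}^2_A$; the standard enumeration of torsion conjugacy classes of $SL(2,\mathbb{Z})$, together with the fact that $K_\varphi\cong K_{\varphi^{-1}}$ identifies each class with its inverse, leaves exactly the five listed representatives, of orders $1,2,3,4,6$. For $g\ge 2$, write $\Sigma_g=\mathbb{H}^2/\Gamma$ with $\Gamma$ a torsion-free cocompact Fuchsian group, lift $\varphi$ to $\tilde\varphi\in\mathrm{Isom}^+(\mathbb{H}^2)$ normalizing $\Gamma$, and check that
\[
\Xi:=\langle\,\Gamma,\ (\tilde\varphi,\ t\mapsto t+1)\,\rangle\ \subset\ \mathrm{Isom}(\mathbb{H}^2)\times\mathrm{Isom}(\mathbb{R})\ \subset\ \mathrm{Isom}(\mathbb{H}^2\times\mathbb{R})
\]
acts freely (the translation in the $\mathbb{R}$-factor rules out fixed points outside $\Gamma$, and $\Gamma$ is torsion-free), properly discontinuously and cocompactly, with $K_\varphi\cong(\mathbb{H}^2\times\mathbb{R})/\Xi$.

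For the converse, each listed manifold is manifestly a K\"ahler mapping torus: $\mathbb{S}^2\times\mathbb{S}^1$ is the mapping torus of $\mathrm{id}_{\mathbb{S}^2}$ on the Fubini--Study sphere; $\mathbb{T}^2_A$ with $A$ of finite order is the mapping torus of the biholomorphic isometry $A$ of a flat K\"ahler torus (a compatible flat metric exists because a finite-order $A$ is conjugate in $GL(2,\mathbb{R})$ to a rotation); and $(\mathbb{H}^2\times\mathbb{R})/\Xi$ fibers over $\mathbb{S}^1$ --- project $\Xi$ to the translation subgroup of $\mathrm{Isom}(\mathbb{R})$, after passing if necessary to an index-two subgroup, obtaining an epimorphism onto $\mathbb{Z}$ whose kernel is a cocompact surface group acting on $\mathbb{H}^2$ --- with hyperbolic-surface fiber and finite-order, hence biholomorphic, monodromy. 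In each case the total space is co-K\"ahler by Li \cite[Theorem~2]{li2008topology}.

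I expect the main obstacle to be the $g=1$ case, specifically (i) showing the linear part of an isometry of a flat torus has finite order (the eigenvalue and diagonalizability argument) and (ii) verifying that, up to conjugacy in $GL(2,\mathbb{Z})$ and inversion, the torsion elements of $SL(2,\mathbb{Z})$ split into exactly the five classes realized by the listed matrices. A secondary delicate point is the $g\ge 2$ converse, where one must know that every free cocompact discrete subgroup of $\mathrm{Isom}(\mathbb{H}^2\times\mathbb{R})$ of the stated form is a surface bundle over the circle with periodic monodromy; this rests on the classification of closed orientable $3$-manifolds modelled on $\mathbb{H}^2\times\mathbb{R}$ geometry. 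The reduction to constant-curvature metrics and the removal of the translation part via isotopy are routine.
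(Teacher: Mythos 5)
Your forward direction, together with the converse for items (1) and (2), is correct but follows a genuinely different route from the paper. The paper never analyzes the fiber directly: it invokes Bazzoni--Goertsches to see that the monodromy is periodic, then Scott's theorem to present $M$ as a Seifert bundle with vanishing Euler number over a $2$-orbifold $B$, and sorts the cases by the sign of $\chi^{\mathrm{orb}}(B)$, quoting Martelli for the $\mathbb{S}^2\times\mathbb{R}$ case, the classification of flat $3$-manifolds together with Marrero--Padr\'on for the Euclidean case, and Scott again for $\mathbb{H}^2\times\mathbb{R}$; its converse is obtained abstractly, by deforming a cosymplectic structure built from the Seifert fibration into a co-K\"ahler one and citing Li. Your uniformization-of-the-fiber argument (constant-curvature metric preserved by the holomorphic isometry, then a genus-by-genus analysis: connectedness of $SO(3)$ for $g=0$, affine isometries with finite-order linear part and the five torsion classes of $SL(2,\mathbb{Z})$ up to conjugacy and inversion for $g=1$, lifting to $\mathbb{H}^2$ for $g\ge 2$) is more elementary and self-contained, and your explicit mapping-torus constructions for $\mathbb{S}^2\times\mathbb{S}^1$ and the torus bundles are cleaner than the paper's abstract converse. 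One harmless slip: for $g=0$ a biholomorphism of $\mathbb{CP}^1$ need not be an isometry of the round metric, but you only need that every orientation-preserving diffeomorphism of $\mathbb{S}^2$ is isotopic to the identity.

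The converse for item (3) has a genuine gap. You project $\Xi$ to $\mathrm{Isom}(\mathbb{R})$ and claim, after an index-two passage, an epimorphism onto $\mathbb{Z}$ whose kernel is a cocompact surface group. Neither claim is justified: for a discrete cocompact $\Xi\subset\mathrm{Isom}(\mathbb{H}^2)\times\mathrm{Isom}(\mathbb{R})$ the second projection can have dense, indiscrete image --- take a cocompact Fuchsian group $\Gamma$, a homomorphism $h:\Gamma\to\mathbb{R}$ with dense image, and $\Xi=\{(\gamma,\;t\mapsto t+h(\gamma)+n):\gamma\in\Gamma,\ n\in\mathbb{Z}\}$ --- and then the kernel of the projection is an infinite-index (hence non-cocompact) subgroup, not a surface group. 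Moreover, passing to an index-two subgroup only exhibits a double cover of $M$ as a mapping torus, and the mapping-torus property does not descend along double covers. The difficulty is real: if $\Xi$ contains elements reversing the $\mathbb{R}$-factor, one gets closed orientable quotients such as $(\Sigma\times\mathbb{S}^1)/\langle(\sigma,c)\rangle$, with $\sigma$ a free orientation-reversing isometric involution of a hyperbolic $\Sigma$ and $c$ a reflection of the circle; such a manifold has first Betti number equal to the non-orientable genus of $\Sigma/\sigma$ minus one, which can be even, so it is not a K\"ahler mapping torus at all. Hence no argument can handle an arbitrary free discrete $\Xi$, and this also shows item (3) must be read with the restriction implicit in the paper's proof (Seifert bundles whose circle fibration is induced by an $\mathbb{S}^1$-action, which is what Epstein's and Wadsley's theorems require there). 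To repair your case (3) you would need either to restrict $\Xi$ (for instance, to groups acting on the $\mathbb{R}$-factor by translations, in which case your fibration argument can be completed by first showing the translation image is discrete after a suitable change of the splitting, or by using the Euler-class argument) or to follow the paper's route: zero Euler number gives a closed $2$-form and a closed $1$-form out of the Killing flow, producing a $K$-cosymplectic structure, which is co-K\"ahler by Goldberg's theorem and a K\"ahler mapping torus by Li's theorem.
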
   		
 	\begin{proof}
 	Assume that $M$ is a K\"ahler mapping torus. Then $M \cong \Sigma_\varphi$, where   $\Sigma_\varphi$ a mapping torus of $\varphi$ defined as
 	$$
 	\Sigma_{\varphi} = \frac{\Sigma \times [0,1]}{(x,0) \sim (\varphi(x),1)}.
 	$$
 	and $(\Sigma, J, h)$ is a compact K\"ahler surface and $\varphi$ is a Hermitian isometry (see Li in \cite{li2008topology} for more details).
 		Let $\mathrm{Isom}(\Sigma, h)$ denote the group of Hermitian isometries of $\Sigma$ and
 		$\mathrm{Isom}_0(\Sigma,h)$ its identity component.
 		By \cite[Theorem~6.6]{bazzoni2015k}, the diffeomorphism $\varphi$ is either
 		isotopic to the identity or has finite order in
 		$$
 		\mathrm{Isom}(\Sigma,h)/\mathrm{Isom}_0(\Sigma,h).
 		$$
 		In particular, $\varphi$ is periodic.
 		By \cite[Theorem~5.4]{scott1983geometries}, the manifold $M$ is a Seifert fibered space
 		over a two-dimensional orbifold $B$ with vanishing Euler number. Moreover, the Thruston geometries
 		$\mathbb{S}^3$, $\mathrm{Nil}$, and $\widetilde{\mathrm{SL}_2(\mathbb{R})}$ do not occur
 		in this setting. Hence, the classification depends on the orbifold Euler characteristic
 		$\chi^{\mathrm{orb}}(B)$.\\
 		
 		\paragraph{Case 1: $\chi^{\mathrm{orb}}(B) > 0$.}  In this case, $M$ admits $\mathbb{S}^2 \times \mathbb{R}$ geometry. By \cite[Proposition~10.3.36]{martelli2016introduction}, $M$ is diffeomorphic to one of
 		$\mathbb{S}^2 \times \mathbb{S}^1$, the orientable $\mathbb{S}^1$-bundle over
 		$\mathbb{RP}^2$, or a Seifert manifold of type $(S^2;(p,q),(p,-q))$.
 		All manifolds of the latter type are diffeomorphic to $\mathbb{S}^2 \times \mathbb{S}^1$.
 		Moreover, by \cite[Exercise~10.3.37]{martelli2016introduction}, the orientable
 		$\mathbb{S}^1$-bundle over $\mathbb{RP}^2$ is diffeomorphic to
 		$\mathbb{RP}^3 \# \mathbb{RP}^3$, which has vanishing first Betti number and is therefore
 		excluded. Hence, the only possibility is $M \cong \mathbb{S}^2 \times \mathbb{S}^1$.\\

 		\paragraph{Case 2: $\chi^{\mathrm{orb}}(B) = 0$.} Then $M$ admits Euclidean geometry and carries a flat metric
 		\cite[Theorem~12.3.1]{martelli2016introduction}.
 		From the classification of closed flat $3$-manifolds
 		\cite{hantzsche1935dreidimensionale, Wolf2011Spaces},
 		the first homology group $H_1(M,\mathbb{Z})$ is isomorphic to one of
 		$\mathbb{Z}^3$, $\mathbb{Z}\oplus\mathbb{Z}_2^2$, $\mathbb{Z}\oplus\mathbb{Z}_3$,
 		$\mathbb{Z}\oplus\mathbb{Z}_2$, $\mathbb{Z}$, or $\mathbb{Z}_4^2$. The Hantzsche-Wendt manifold, corresponding to $\mathbb{Z}_4^2$, has $b_1(M)=0$ and is
 		thus excluded. All remaining cases have positive first Betti number and, by
 		\cite[Theorem~3.2]{marrero1998new}, $M$ is diffeomorphic to one of
 		$M_1(1)$, $M_2(1)$, $M'_1(1)$, $M'_2(1)$, or $\mathbb{T}^3$.
 		As shown in \cite{marrero1998new}, these manifolds are precisely the torus bundles
 		$\mathbb{T}^2_A$ with
 		$$
 		A = \begin{pmatrix}0 & 1 \\ -1 & 0\end{pmatrix},\;
 		\begin{pmatrix}-1 & 0 \\ 0 & -1\end{pmatrix},\;
 		\begin{pmatrix}0 & -1 \\ 1 & 1\end{pmatrix},\;
 		\begin{pmatrix}-1 & -1 \\ 1 & 0\end{pmatrix},
 		$$
 		and the identity matrix.\\
 		
 		\paragraph{Case 3: $\chi^{\mathrm{orb}}(B) < 0$.} 
 		In this case, $M$ admits $\mathbb{H}^2 \times \mathbb{R}$ geometry and hence is
 		diffeomorphic to a quotient
 		$$
 		M \cong (\mathbb{H}^2 \times \mathbb{R})/\Xi,
 		$$
 		where $\Xi \subset \mathrm{Isom}(\mathbb{H}^2 \times \mathbb{R})$ is a discrete subgroup
 		acting freely \cite{scott1983geometries}.
 		
 		Conversely, the manifolds in Proposition~\ref{thm:cokha} are precisely the Seifert bundles with
 		vanishing Euler number (see \cite{scott1983geometries}). Denote by $\mathcal{F}$ the
 		circle foliation whose leaves are the fibers of the Seifert bundle. By
 		\cite{epstein1972periodic}, there exists an $\mathbb{S}^1$-action whose orbits are exactly
 		the leaves of $\mathcal{F}$. By Wadsley’s theorem \cite{wadsley1975geodesic}, there exist a
 		Riemannian metric $g$ and a unit Killing vector field $R$ such that the trajectories of
 		$R$ coincide with the leaves of $\mathcal{F}$.
 		
 		Now define
 		$$
 		\lambda := \iota_R g,
 		$$
 		let $v_g$ be the Riemannian volume form associated with $g$, and define the $2$-form
 		$$
 		\omega := \iota_R v_g.
 		$$
 		Since $R$ is Killing, we have
 		$$
 		d\omega = d(\iota_R v_g) = \mathcal{L}_R v_g = 0.
 		$$
 		Moreover, since $\iota_R(\lambda \wedge v_g)=0$, it follows that
 		$$
 		v_g = \lambda \wedge \omega.
 		$$ 		
 		A straightforward computation shows that
 		$$
 		d\lambda(R,X)=0.
 		$$
 		Therefore
 		$$
 		[d\lambda] \in H_B^2(\mathcal{F}).
 		$$
 		From \cite{ prieto2001euler, saralegui1985euler}, we deduce that
 		$$
 		[d\lambda] = e_{\mathcal{F}},
 		$$
 		where $e_{\mathcal{F}}$ denotes the Euler class of the Seifert bundle defined by
 		$\mathcal{F}$. By assumption, this class vanishes. Therefore, there exists a basic
 		$1$-form $\alpha$ such that
 		$$
 		d\lambda = d\alpha.
 		$$
 		By letting $\widetilde{\lambda} := \lambda - \alpha$, we have $d\widetilde{\lambda}=0$, and since $\iota_R \alpha = 0$, we obtain
 		$$
 		\widetilde{\lambda} \wedge \omega = \lambda \wedge \omega = v_g,
 		$$
 		which is a volume form. Hence $(M,\omega,\widetilde{\lambda})$ is a cosymplectic structure
 		with Reeb vector field $R$. Since $R$ is Killing, it follows from \cite{bazzoni2015k} that
 		$(M,\omega,\widetilde{\lambda})$ is a $K$-cosymplectic manifold. Similarly, as in the proof
 		of Theorem~\ref{thm:mains}, we deduce that $M$ admits a co-K\"ahler structure. Therefore,
 		$M$ is a K\"ahler mapping torus (see \cite[Theorem~2]{li2008topology}). This concludes the proof.
 	\end{proof}
 	In the case of Sasakian manifolds, an analogous result can be obtained as follows.
 	
 	\begin{proposition}\label{thm:sasak}
 		Let $M$ be a closed, oriented $3$-manifold. Then $M$ admits a Sasakian structure if and only if $M$ is diffeomorphic to one of the following:
 		$$
 		\mathrm{Nil}/\Gamma, \quad 
 		\mathbb{S}^3/\Gamma, \quad 
 		\widetilde{\mathrm{SL}(2,\mathbb{R})}/\Gamma,
 		$$
 		where $\Gamma$ is a discrete subgroup of the identity component of the corresponding isometry group.
 	\end{proposition}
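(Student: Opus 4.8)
The plan is to prove the two implications separately, closely mirroring the structure of the proof of Proposition~\ref{thm:cokha}, with the nonvanishing of the transverse Euler class playing the role that distinguished the co-K\"ahler case from the Sasakian one in Theorem~\ref{thm:mains}.

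For the forward implication, suppose $M$ carries a Sasakian structure $(\phi,\xi,\eta,g)$. Then $\xi$ is a unit Killing vector field and $\eta\wedge d\eta$ is a volume form, so the Reeb foliation $\mathcal{F}_\xi=\mathbb{R}\xi$ is a Killing, hence Riemannian, foliation; arguing as in Theorem~\ref{thm:mains} via \cite{kacimi1985cohomologie, molino1985deux, tondeur1997geometry}, $H^2_B(\mathcal{F}_\xi)\cong\mathbb{R}$, and this group is generated by $[d\eta]$ since $\eta\wedge d\eta\neq 0$ (if $d\eta=d\beta$ with $\beta$ basic, then $\beta\wedge d\eta=0$ for degree reasons, whence $\eta\wedge d\eta=(\eta-\beta)\wedge d(\eta-\beta)=0$, a contradiction). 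First I would recall that a compact Sasakian $3$-manifold is a Seifert fibered space: either directly, from Geiges's analysis of normal contact $3$-manifolds, or by deforming the $K$-contact structure to a quasi-regular one so that the Reeb flow integrates to a locally free $\mathbb{S}^1$-action. The corresponding Seifert bundle has Euler class $e_{\mathcal{F}_\xi}$ proportional to $[d\eta]$, hence \emph{nonzero}. By Scott's classification \cite[Theorem~5.4]{scott1983geometries}, a closed orientable Seifert fibered $3$-manifold with nonvanishing Euler number carries exactly one of the geometries $\mathbb{S}^3$, $\mathrm{Nil}$, $\widetilde{\mathrm{SL}(2,\mathbb{R})}$, according as $\chi^{\mathrm{orb}}(B)$ is positive, zero, or negative. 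Being geometric, $M\cong X/\Gamma$ with $\Gamma\subset\mathrm{Isom}(X)$ discrete and acting freely; since $M$ is orientable and $\Gamma$ preserves the Seifert fibration, $\Gamma$ lies in the identity component $\mathrm{Isom}_0(X)$, which yields the three families in the statement.

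For the converse, suppose $M\cong X/\Gamma$ with $X\in\{\mathrm{Nil},\mathbb{S}^3,\widetilde{\mathrm{SL}(2,\mathbb{R})}\}$ and $\Gamma\subset\mathrm{Isom}_0(X)$ discrete and free. These manifolds are precisely the Seifert bundles with nonvanishing Euler number (Scott \cite{scott1983geometries}), so I would repeat the construction from the converse part of Proposition~\ref{thm:cokha}, keeping track of the single place where it differs. Let $\mathcal{F}$ be the circle foliation by the fibers; realize it by an $\mathbb{S}^1$-action \cite{epstein1972periodic} and, by Wadsley's theorem \cite{wadsley1975geodesic}, choose a metric $g$ and a unit Killing field $R$ tangent to $\mathcal{F}$. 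Put $\lambda:=\iota_R g$ and $\omega:=\iota_R v_g$, so that $d\omega=0$, $v_g=\lambda\wedge\omega$, and $[d\lambda]\in H^2_B(\mathcal{F})$ is (a multiple of) the Euler class $e_\mathcal{F}$ \cite{saralegui1985euler, prieto2001euler}. Unlike in Proposition~\ref{thm:cokha}, here $e_\mathcal{F}\neq 0$; since $H^2_B(\mathcal{F})\cong\mathbb{R}=\langle[\omega]\rangle$, there exist a nonzero constant $c$ and a basic $1$-form $\alpha$ with $d\lambda=c\,\omega+d\alpha$. Setting $\widetilde{\lambda}:=\lambda-\alpha$ and using $\iota_R\alpha=0$, one gets $d\widetilde{\lambda}=c\,\omega$, Reeb vector field $R$, and $\widetilde{\lambda}\wedge d\widetilde{\lambda}=c\,v_g$, a volume form. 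Thus $(M,\widetilde{\lambda})$ is $K$-contact, exactly as in Case~2 of the proof of Theorem~\ref{thm:mains}, and in dimension $3$ every $K$-contact structure is Sasakian (see \cite{blair2010riemannian}). Hence $M$ is Sasakian, which closes the equivalence.

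The step I expect to be the main obstacle is the passage from the Sasakian hypothesis to a genuine Seifert fibration with controlled Euler number: one must justify that the possibly irregular Reeb flow may be replaced by a periodic one without changing $M$ (quasi-regular approximation), and then identify the Euler class of the resulting Seifert bundle with the nonzero basic class $[d\eta]$. Once $M$ is known to be Seifert fibered with $e\neq 0$, the remainder of the forward direction is bookkeeping with Scott's geometrization of Seifert manifolds and with the location of $\Gamma$ inside the identity component of the isometry group, while the converse is a transcription of the argument already carried out in Proposition~\ref{thm:cokha}, the only change being that the Euler class is now nonzero rather than zero.
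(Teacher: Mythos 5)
Your converse runs exactly along the paper's lines: realize the fibers of the Seifert fibration by an $\mathbb{S}^1$-action (Epstein), use Wadsley to get a metric with unit Killing field $R$, set $\lambda=\iota_R g$, $\omega=\iota_R v_g$, identify $[d\lambda]$ with the Euler class, and use $e_{\mathcal{F}}\neq 0$ to produce $\widetilde{\lambda}$ with $\widetilde{\lambda}\wedge d\widetilde{\lambda}$ a volume form, hence a $K$-contact and therefore Sasakian structure. Your forward direction, however, takes a genuinely different route: the paper quotes Monna \cite{monna1984feuilletages} to see that the Reeb ($K$-contact) foliation of a closed Sasakian $3$-manifold is a Seifert fibration admitting no complementary Riemannian foliation, and then Saralegui \cite[Theorem~B]{saralegui1985euler} to conclude $e_{\mathcal{F}}\neq 0$ before invoking Scott \cite{scott1983geometries}; you instead pass to a quasi-regular approximation (or cite Geiges's classification of normal contact $3$-manifolds) and argue that the Euler class is a nonzero multiple of $[d\eta]\in H_B^2$. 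Both routes are legitimate: the paper's citations dispose in one stroke of precisely the step you flag as the main obstacle (replacing a possibly irregular Reeb flow by a Seifert fibration and controlling its Euler number), while your route makes the mechanism — nonexactness of $d\eta$ in basic cohomology — explicit and parallel to the dichotomy $r=0$ versus $r\neq 0$ in Theorem~\ref{thm:mains}. Note that both you and the paper treat the placement of $\Gamma$ in the identity component of the isometry group rather lightly.

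One concrete slip to repair: your parenthetical proof that $[d\eta]\neq 0$ in $H_B^2$ is wrong as written. If $d\eta=d\beta$ with $\beta$ basic, then $d(\eta-\beta)=0$, so $(\eta-\beta)\wedge d(\eta-\beta)$ vanishes identically and the claimed identity $\eta\wedge d\eta=(\eta-\beta)\wedge d(\eta-\beta)$ cannot hold. What is true is that $\beta\wedge d\eta=0$ pointwise (contract the $3$-form with $\xi$ and use $\iota_\xi\beta=0$, $\iota_\xi d\eta=0$), hence $\eta\wedge d\eta=\eta\wedge d\beta=-d(\eta\wedge\beta)$ would be exact, contradicting $\int_M\eta\wedge d\eta\neq 0$ on the closed manifold $M$. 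With that Stokes-type repair, and granting the quasi-regular approximation you invoke, your argument goes through and yields the same classification as the paper's.
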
 	
 	\begin{proof} 
 	Let $(\phi, \xi, \eta, g)$ be a Sasakian structure on $M$, with the second fundamental form
 		$d\eta = \Omega$. Then $\eta$ is a $K$-contact form (see Banyaga in \cite{banyaga1990note}).
 		By Theorem of Monna \cite[Theorem, p.~86]{monna1984feuilletages}, the
 		one-dimensional foliation $\mathcal{F}_\Omega$ defines a Seifert fibration on $M$,
 		and this foliation does not admit any complementary Riemannian foliation.
 		By \cite[Theorem~B]{saralegui1985euler}, we deduce that the Euler class
 		$e_{\mathcal{F}}$ is nonzero. Then, from \cite{scott1983geometries}, $M$ admits one
 		of the following Thruston geometries:
 		$$
 		\mathbb{S}^3, \quad \mathrm{Nil}, \quad \widetilde{\mathrm{SL}_2(\mathbb{R})}.
 		$$ 
 		Conversely, the manifolds in Proposition~\ref{thm:sasak} are precisely the Seifert bundles with
 		nonzero Euler number (see \cite{scott1983geometries}). Using the same arguments as in
 		the sufficiency part of the proof of Proposition~\ref{thm:cokha}, we obtain that
 		$$
 		[d\lambda] = e_{\mathcal{F}},
 		$$
 		where $e_{\mathcal{F}}$ denotes the Euler class of the Seifert bundle defined by the
 		circle foliation $\mathcal{F}$, and $\lambda$ is the $1$-form defined in the proof of
 		Proposition~\ref{thm:cokha}. Since $e_{\mathcal{F}} \neq 0$, there exists a basic
 		$1$-form $\alpha$ such that
 		$$
 		\omega = d(\lambda - \alpha),
 		$$
 		where $\omega$ is defined as in the proof of Proposition~\ref{thm:cokha}. Set
 		$ 
 		\widetilde{\lambda} := \lambda - \alpha.
 		$ 
 		Since $\alpha \wedge \omega = 0$, we obtain
 		$$
 		\widetilde{\lambda} \wedge d\widetilde{\lambda}
 		= \lambda \wedge \omega = v_g,
 		$$
 		where $v_g$ is the Riemannian volume form. Thus $(M, \widetilde{\lambda})$ is a
 		$K$-contact manifold satisfying
 		$$
 		\iota_R \widetilde{\lambda} = 1, \qquad
 		\iota_R d\widetilde{\lambda} = 0.
 		$$
 		In dimension $3$, every $K$-contact structure is Sasakian. Hence $M$ admits a
 		Sasakian structure. This concludes the proof.
 	\end{proof} 	
 	From Theorem~\ref{thm:mains}, Proposition~\ref{thm:cokha}, and
 	Proposition~\ref{thm:sasak}, we deduce the following.
 	
 	\begin{corollary}\label{cor:classification}
 		A closed, orientable $3$-dimensional manifold admits a quasi-Sasakian structure if
 		and only if it is diffeomorphic to one of the manifolds listed in
 		Proposition~\ref{thm:sasak} or in Proposition~\ref{thm:cokha}.
 	\end{corollary}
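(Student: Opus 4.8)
The plan is to assemble the three results already established and check that they fit together without gaps. First I would invoke Theorem~\ref{thm:mains}: a closed, orientable $3$-manifold $M$ admits a quasi-Sasakian structure if and only if it admits a Sasakian structure or the structure of a K\"ahler mapping torus. This reduces the classification to identifying, separately, the diffeomorphism types realizing each of these two alternatives.

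Next I would apply Proposition~\ref{thm:sasak} to the Sasakian case and Proposition~\ref{thm:cokha} to the K\"ahler-mapping-torus case. The forward implication then follows immediately: if $M$ is quasi-Sasakian, then by Theorem~\ref{thm:mains} it is either Sasakian, hence (Proposition~\ref{thm:sasak}) diffeomorphic to one of $\mathrm{Nil}/\Gamma$, $\mathbb{S}^3/\Gamma$, $\widetilde{\mathrm{SL}(2,\mathbb{R})}/\Gamma$, or a K\"ahler mapping torus, hence (Proposition~\ref{thm:cokha}) diffeomorphic to $\mathbb{S}^2\times\mathbb{S}^1$, to one of the five torus bundles $\mathbb{T}^2_A$, or to a quotient $(\mathbb{H}^2\times\mathbb{R})/\Xi$. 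For the converse, each manifold on either list carries, by the corresponding proposition, a Sasakian or a co-K\"ahler structure, and both of these are special cases of quasi-Sasakian structures, as recalled in Section~\ref{sec:basics}.

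The only point requiring a little care is that the statement quantifies over ``one of the manifolds listed in Proposition~\ref{thm:sasak} or in Proposition~\ref{thm:cokha},'' so I would add a remark that the two lists are in fact disjoint, confirming that the classification is a genuine dichotomy. This follows from the Seifert-fibration description used in the two propositions: a $3$-dimensional Sasakian manifold is a Seifert bundle with nonzero Euler number, whereas a K\"ahler mapping torus is a Seifert bundle with vanishing Euler number; equivalently, the Thurston geometries $\mathbb{S}^3$, $\mathrm{Nil}$, $\widetilde{\mathrm{SL}_2(\mathbb{R})}$ occur in the former case and the geometries $\mathbb{S}^2\times\mathbb{R}$, Euclidean, $\mathbb{H}^2\times\mathbb{R}$ in the latter, and these six geometries are pairwise distinct. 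I do not expect any real obstacle here: this corollary is a bookkeeping consequence of the three preceding results, and the only mild subtlety is to phrase the union of the two lists cleanly and to record the disjointness so that the reader sees the two cases cannot be confused.
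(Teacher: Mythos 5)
Your proposal is correct and follows essentially the same route as the paper, which simply cites Theorem~\ref{thm:mains} together with Propositions~\ref{thm:sasak} and~\ref{thm:cokha} and notes that the corollary is an immediate consequence. Your additional remark on the disjointness of the two lists (nonzero versus vanishing Euler number of the Seifert fibration) is a harmless and accurate clarification, though not needed for the statement as phrased.
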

 	
 	\begin{proof}
 		The result follows directly from the previous statements.
 	\end{proof}
 	
 	To conclude this section, we present a more general result for quasi-Sasakian manifolds.
 	
 	\begin{proposition}\label{thm:main}
 		Let $M$ be a closed, orientable $(2n+1)$-dimensional manifold admitting a quasi-Sasakian
 		structure $(\phi,\xi,\eta,g)$ with the second fundamental form $\Omega$ such that
 		$$
 		H_B^2(\mathcal{F}_\Omega) \cong \mathbb{R}.
 		$$
 		Then $M$ admits either a $K$-cosymplectic structure or a $K$-contact structure.
 	\end{proposition}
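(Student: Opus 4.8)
The plan is to follow the template of the proof of Theorem~\ref{thm:mains} almost verbatim, the only structural difference being that in dimension three the identification $H_B^2(\mathcal{F}_\Omega)\cong\mathbb{R}$ came for free (from Molino--Sergiescu and Tondeur, since $\mathcal{F}_\Omega$ is a taut Riemannian foliation of codimension two), whereas here it is imposed as a hypothesis. First I would recall that, for any quasi-Sasakian structure and in every dimension, the flow foliation $\mathcal{F}_\Omega=\mathbb{R}\xi$ is a Killing foliation, hence Riemannian (Blair~\cite{blair1966thesis}, Tanno~\cite{tanno1971quasi}). Next I would observe that the fundamental form $\Omega(X,Y)=g(X,\phi Y)$ has maximal transverse rank $2n$ everywhere, so $\eta\wedge\Omega^n$ is a volume form, and that $\Omega$, being closed with $\iota_\xi\Omega=0$ and $\iota_\xi d\Omega=0$, is a basic $2$-form; therefore $[\Omega]$ is a nonzero class in $H_B^2(\mathcal{F}_\Omega)\cong\mathbb{R}$, and consequently $H_B^2(\mathcal{F}_\Omega)=\langle[\Omega]\rangle$.

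I would then use normality in the form $d\eta(\xi,\cdot)=0$ to see that $d\eta$ is a closed basic $2$-form, so that $[d\eta]=r\,[\Omega]$ in $H_B^2(\mathcal{F}_\Omega)$ for a unique $r\in\mathbb{R}$, i.e.
\[
d\eta = r\,\Omega + d\alpha
\]
for some basic $1$-form $\alpha$. Setting $\widetilde\eta:=\eta-\alpha$ and using $\iota_\xi\alpha=0$ (so that $\alpha\wedge\Omega^n=0$, being a basic $(2n+1)$-form over a $2n$-dimensional leaf space), one obtains $\iota_\xi\widetilde\eta=1$, $\iota_\xi d\widetilde\eta=r\,\iota_\xi\Omega=0$, and $\widetilde\eta\wedge\Omega^n=\eta\wedge\Omega^n$, a volume form. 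If $r=0$ then $d\widetilde\eta=0$ and $(M,\Omega,\widetilde\eta)$ is a cosymplectic structure with Reeb field $\xi$; because $\xi$ generates a Killing foliation, the averaging argument of Bazzoni--Goertsches \cite[Prop.~2.8]{bazzoni2015k}, exactly as in the proof of Theorem~\ref{thm:mains}, upgrades it to a $K$-cosymplectic structure (with Goldberg's theorem \cite[Proposition~3]{goldberg1969integrability} available should one want normality). If $r\neq0$, a constant rescaling reduces to $r=1$, giving $d\widetilde\eta=\Omega$ and $\widetilde\eta\wedge(d\widetilde\eta)^n=\eta\wedge\Omega^n\neq0$, so $\widetilde\eta$ is a contact form whose Reeb field is a nonzero multiple of $\xi$; since $\mathcal{F}_\Omega$ is Riemannian this Reeb field is Killing for a compatible metric, and $(M,\widetilde\eta)$ is $K$-contact.

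The step I expect to require the most care is the passage from the deformed closed (resp. contact) form $\widetilde\eta$ to an honest $K$-cosymplectic (resp. $K$-contact) structure, i.e. producing a compatible metric invariant under the Reeb flow: in dimension three this was handled by citing Bazzoni--Goertsches and Goldberg, and here one must verify that the same averaging over the closure of the Reeb flow applies, which is precisely where the Killing/Riemannian-foliation property is used. The other point I would emphasize is conceptual rather than computational: the hypothesis $H_B^2(\mathcal{F}_\Omega)\cong\mathbb{R}$ genuinely cannot be dropped in higher dimensions, since it is exactly what forces $d\eta$ and $\Omega$ to be cohomologous up to scale in basic cohomology; and one must be careful not to confuse the (always maximal) transverse rank $2n$ of $\Omega$ with the rank of $d\eta$, which for a genuinely intermediate quasi-Sasakian structure is an odd integer strictly between $1$ and $2n+1$.
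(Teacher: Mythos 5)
Your proposal is correct and follows essentially the same route as the paper, whose proof simply invokes the hypothesis $H_B^2(\mathcal{F}_\Omega)\cong\mathbb{R}$ to get proportionality of $[d\eta]$ and $[\Omega]$ and then repeats the deformation argument of Theorem~\ref{thm:mains} ($d\eta=r\,\Omega+d\alpha$, $\widetilde\eta=\eta-\alpha$, case split on $r$). You actually supply more detail than the paper does (nonvanishing of $[\Omega]$ via $\eta\wedge\Omega^n$, vanishing of basic forms of degree exceeding the codimension, and the averaging step needed to make the deformed structure genuinely $K$-cosymplectic or $K$-contact), and your formulation $[d\eta]=r[\Omega]$ is the cleaner direction of proportionality.
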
 	
 	\begin{proof}
 		By assumption $H_B^2(\mathcal{F}_\Omega) \cong \mathbb{R}$, there exists
 		$c \in \mathbb{R}$ such that
 		$$
 		[\Omega] = c [d\eta] \in H_B^2(\mathcal{F}_\Omega).
 		$$
 		Using the same arguments as in the proof of the first part of Theorem~\ref{thm:mains}, we conclude that $(M,\Omega,\widetilde{\eta})$ is either
 		$K$-cosymplectic or $K$-contact, with Reeb vector field $\xi$.
 	\end{proof}
 	
 Recently Di Pinto and Dileo in \cite{DiPinto1} and \cite{DiPinto2} have introduce a new class of almost contact metric manifolds, called anti-quasi-Sasakian (aqS) which states that an anti-quasi-Sasakian
 manifold is an almost contact metric manifold such that
 $$
 d\Omega=0 \;\;\mbox{and}\;\;  N =   2\,d\eta \otimes \xi.
 $$ 
 In the single authored paper \cite{DiPinto1}, Di Pinto prove that every compact aqS manifold has nonvanishing second Betti number. Therefore we have the following.
 \begin{proposition}
 	Let $M$ be a closed, orientable $3$-dimensional manifold admitting an anti-quasi-Sasakian
 structure $(\phi,\xi,\eta,g)$ with the second fundamental form $\Omega$. Then $M$ is either  Sasakian  or K\"ahler mapping torus.  
 \end{proposition}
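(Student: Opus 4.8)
The plan is to show that, in dimension three, the anti-quasi-Sasakian (aqS) condition is rigid enough to force $d\eta=0$; once this is known, the structure is co-K\"ahler, and the conclusion follows from Li's description of closed co-K\"ahler $3$-manifolds \cite[Theorem~2]{li2008topology}, exactly as in Case~$1$ of the proof of Theorem~\ref{thm:mains}.

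First I would recall the two structural facts about aqS manifolds established by Di Pinto and Dileo in \cite{DiPinto1, DiPinto2}: the Reeb field $\xi$ is Killing, so that $\mathcal{L}_\xi\eta=0$ and hence $\iota_\xi d\eta=\mathcal{L}_\xi\eta-d(\iota_\xi\eta)=0$; and the $2$-form $d\eta$ is $\phi$-anti-invariant,
$$
d\eta(\phi X,\phi Y)=-\,d\eta(X,Y),
$$
for all vector fields $X,Y$ on $M$. (If a self-contained treatment is preferred, this identity is the computational heart of the matter and follows from $[\phi,\phi]=0$ — which is exactly the aqS relation $N=2\,d\eta\otimes\xi$ — together with $d\Omega=0$.)

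Next comes the dimension-three reduction. Since $\eta=g(\xi,\cdot)$, one has the orthogonal splitting $TM=\mathbb{R}\xi\oplus\ker\eta$, and by \eqref{eq:almost_contact} the tensor $\phi$ restricts to a $g$-compatible almost complex structure on the $2$-plane bundle $\ker\eta$. On a $2$-dimensional vector space a complex structure acts trivially on $\Lambda^2$, so the $2$-form $d\eta|_{\ker\eta}$ is automatically $\phi$-invariant; comparing this with the anti-invariance identity forces $d\eta|_{\ker\eta}=0$. Together with $\iota_\xi d\eta=0$ and the splitting $TM=\mathbb{R}\xi\oplus\ker\eta$, this gives $d\eta=0$ on all of $M$. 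The aqS relation $N=2\,d\eta\otimes\xi$ means $[\phi,\phi]=0$, so together with $d\eta=0$ the normality tensor $N=[\phi,\phi]+2\,d\eta\otimes\xi$ vanishes and $(\phi,\xi,\eta)$ is normal. Since moreover $d\eta=0$ and $d\Omega=0$, the quadruple $(\phi,\xi,\eta,g)$ is cosymplectic, and being normal it is co-K\"ahler. By Li \cite[Theorem~2]{li2008topology}, a closed orientable co-K\"ahler $3$-manifold is a K\"ahler mapping torus; in particular $M$ is either Sasakian or a K\"ahler mapping torus, which proves the proposition. In fact the argument shows that in dimension three every aqS structure is co-K\"ahler (and conversely a co-K\"ahler structure trivially satisfies the aqS conditions), so the ``Sasakian'' alternative never actually occurs here; this is consistent with Di Pinto's theorem \cite{DiPinto1}, since a K\"ahler mapping torus has $b_1$ odd and hence, by Poincar\'e duality, nonvanishing $b_2$.

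The main obstacle is the input drawn from \cite{DiPinto1, DiPinto2}: once the Killing property of $\xi$ and the $\phi$-anti-invariance of $d\eta$ are granted, everything else is elementary linear algebra together with the cited classification. Should one wish to avoid these references, the only genuinely computational point is to derive the anti-invariance of $d\eta$ directly from $[\phi,\phi]=0$ and $d\Omega=0$.
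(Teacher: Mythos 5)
Your argument is correct, but it follows a genuinely different route from the paper's. The paper simply reruns the cohomological deformation argument of Theorem~\ref{thm:mains}: citing \cite{DiPinto2} for the Killing property of $\xi$ and for $d\eta(\xi,\cdot)=0$, it notes that $\mathcal{F}_\Omega$ is a Killing (hence Riemannian) foliation with $H_B^2(\mathcal{F}_\Omega)\cong\mathbb{R}$, writes $d\eta=r\,\Omega+d\alpha$ with $\alpha$ basic, and concludes through the same two cases ($r=0$ giving a K\"ahler mapping torus, $r\neq 0$ giving Sasakian). You instead use the $\phi$-anti-invariance of $d\eta$ from Di Pinto--Dileo and the elementary fact that on the $2$-plane bundle $\ker\eta$ every $2$-form is automatically $\phi$-invariant, so in dimension three $d\eta|_{\ker\eta}=0$; together with $\iota_\xi d\eta=0$ this gives $d\eta=0$, the structure is cosymplectic and normal, hence co-K\"ahler, and Li's theorem \cite[Theorem~2]{li2008topology} finishes. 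This is more elementary (no basic cohomology, no deformation) and yields a strictly sharper conclusion that the paper's proof does not detect: every three-dimensional aqS structure is itself co-K\"ahler, so only the mapping-torus alternative actually occurs, consistently with Di Pinto's $b_2\neq 0$ theorem. One small caveat: your parenthetical derivation of the anti-invariance is slightly miscalibrated. Reading the paper's condition $N=2\,d\eta\otimes\xi$ literally with $N=[\phi,\phi]+2\,d\eta\otimes\xi$ gives $[\phi,\phi]=0$, and applying $\eta$ to the Nijenhuis torsion then yields $d\eta(\phi X,\phi Y)=0$ rather than the anti-invariance identity (in Di Pinto--Dileo's convention the aqS condition is $[\phi,\phi]=2\,d\eta\otimes\xi$, from which anti-invariance follows directly, with no need of $d\Omega=0$); either reading forces $d\eta|_{\ker\eta}=0$ in dimension three, so your conclusion and the rest of the proof stand unchanged.
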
		
 \begin{proof}
 Consider an anti-quasi Sasakian structure $(\phi,\xi,\eta,g)$ on $M$. By \cite[Proposition 1.13]{DiPinto2}, the Reeb vector field $\xi$ is Killing, then the foliation given $\mathcal{F}_{\Omega}$ is Killing. By the proof of Theorem \ref{thm:mains},
 $$
 H_B^2(\mathcal{F}_\Omega) \cong \mathbb{R}. 
 $$
From Proposition 1.3 in \cite{DiPinto2}, $d\eta(\xi, \cdot)=0$, and the fact that $d\Omega=0$, then there exist  a constant $r \in \mathbb{R}$ and a basic $1$-form $\alpha$ such that
$$
	d\eta = r\,\Omega + d\alpha,
$$
as per the proof of Theorem \ref{thm:mains} above, which completes the proof.
 \end{proof}

 	\section*{Acknowledgments}	
 	
 	The authors are grateful to A. Banyaga (PennState University) for invaluable discussions and support.


\begin{thebibliography}{10}  
 		
 	\bibitem{banyaga1990note}	A. Banyaga, A note on Weinstein's conjecture, Proc. Amer. Math. Soc. 109 (1990), no. 3, 855-858. 
 		\bibitem{bazzoni2015k}
 		G. Bazzoni, O. Goertsches, $K$-cosymplectic manifolds, Ann. Global Anal. Geom. 47 (2015), no.~3, 239--270.
 		
 		\bibitem{blair1967quasi}
 		D.~E. Blair, The theory of quasi-Sasakian structures, J. Differential Geometry, 1 (1967), 331--345.
 		
 		\bibitem{blair2010riemannian}
 		D.~E. Blair, Riemannian Geometry of Contact and Symplectic Manifolds. Progress in Mathematics, vol.~203, Birkh\"auser, Boston, 2010.
 		
 		\bibitem{blairgoldberg1967topology}
 		D.~E. Blair, S. Goldberg, Topology of almost contact manifolds, J. Differential Geometry, 1 (1967), no.~3-4, 347-354.
 		
 		\bibitem{blair1966thesis}
 		D.~E. Blair, The Theory of Quasi-Sasakian Structures. Ph.D. Thesis,
 		University of Illinois at Urbana-Champaign, 1966.
 		
 		\bibitem{calin1992contact}
 		C. C\u{a}lin,
 		Contact CR-submanifolds of quasi-Sasakian manifolds,  Bull. Math. Soc. Sci. Math. Roumanie (N.S.) 36(84) (1992), no. 3-4, 217-226. 
 	\bibitem{cappelletti2013survey} B. Cappelletti-Montano, A. De Nicola and I. Yudin, A survey on cosymplectic geometry, Rev. Math. Phys. 25 (2013), 1343002
 		\bibitem{cappelletti2008$3$}
 		B. Cappelletti Montano, A. De Nicola, G. Dileo, $3$-quasi-Sasakian manifolds,  Ann. Global Anal. Geom. 33 (2008), no.~4, 397--409.
 		
 		\bibitem{cappelletti2009geometry}
 		B. Cappelletti Montano, A. De Nicola, G. Dileo,
 		The geometry of $3$-quasi-Sasakian manifolds, Internat. J. Math. 20 (2009), no. 9, 1081-1105.
 		 \bibitem{chinea1993topology} D. Chinea, M. de Leon, J. C.  Marrero, Topology of cosymplectic manifolds. J. Math. Pures Appl. (9) 72 (1993), no. 6, 567-591.
 		\bibitem{desarkar20093}
 		U.~C. De, A. Sarkar, On $3$-dimensional quasi-Sasakian manifolds,  SUT J. Math. 45 (2009), no.~1, 59-71.
 		\bibitem{DiPinto1} D. Di Pinto, On anti-quasi-Sasakian manifolds of maximal rank. J. Geom. Phys. 200 (2024), Paper No. 105174, 10 pp. 
 		\bibitem{DiPinto2} D. Di Pinto, G. Dileo, Anti-quasi-Sasakian manifolds. Ann. Global Anal. Geom. 64 (2023), no. 1, Paper No. 5, 35 pp.
 		\bibitem{epstein1972periodic}D. B. A. Epstein, Periodic flows on three-manifolds. Ann. of Math. 95 (1972), no. 2, 66-82.
 		\bibitem{friedrich2001almost}
 		T. Friedrich, S. Ivanov,
 		Almost contact manifolds, connections with torsion and parallel spinors,
 		\emph{arXiv preprint} math/0111131 (2001).
 		
 		\bibitem{goldberg1969integrability}
 		S. Goldberg, K. Yano,
 		Integrability of almost cosymplectic structures, Pacific J. Math. 31 (1969), 373-382. 
 		\bibitem{gonzalezchinea1989quasi}
 		J.~C. Gonzalez, D. Chinea,
 		Quasi-Sasakian homogeneous structures on the generalized Heisenberg group $H(p,1)$, Proc. Amer. Math. Soc. 105 (1989), no. 1, 173-184.  
 		\bibitem{hantzsche1935dreidimensionale} W. Hantzsche, H. Wendt, Dreidimensionale euklidische Raumformen, Math. Ann. 110 (1935), no. 1, 593-611. 
 		\bibitem{kacimi1985cohomologie}
 		A.~E. Kacimi-Alaoui, V. Sergiescu, G. Hector,
 		La cohomologie basique d’un feuilletage riemannien est de dimension finie, Math. Z. 188 (1985), no. 4, 593-599.  
 		\bibitem{kanemaki1977quasi}
 		S. Kanemaki,
 		Quasi-Sasakian manifolds, T\^ohoku Math. J. 29 (1977), no. 2, 227-233. 
 		
 		\bibitem{kanemaki1984quasi}
 		S. Kanemaki,
 		On quasi-Sasakian manifolds, Banach Center Publ., 12 (1984),95-125. 
 		
 		\bibitem{kirichenko2002geometry}
 		V.~F. Kirichenko, A.~R. Rustanov,
 		Differential geometry of quasi-Sasakian manifolds, Sb. Math. 193 (2002), no. 7-8, 1173-1201  
 		
 		\bibitem{li2008topology}
 		H. Li,
 		Topology of co-symplectic and co-K\"ahler manifolds, Asian J. Math. 12 (2008), no. 4, 527-543. 
 		\bibitem{libermann1962quelques}P. Libermann, Sur quelques exemples de structures pfaffiennes et presque cosymplectiques, Ann. Mat. Pura Appl. (4) 60 (1962), 153-172. 
 		\bibitem{martelli2016introduction} B. Martelli, An introduction to geometric topology, arXiv preprint arXiv:1610.02592, 2016.
 		\bibitem{marrero1998new}J. C. Marrero, E. Padron, New examples of compact cosymplectic solvmanifolds, Arch. Math. (Brno) 34 (1998), no. 3, 337-345. 
 		\bibitem{molino1985deux}
 		P. Molino, V. Sergiescu, Deux remarques sur les flots riemanniens, Manuscripta Mathematica. 51 (1985), 145--161.
 		\bibitem{monna1984feuilletages} G. Monna, Feuilletages de $K$-contact sur les variétés compactes de dimension 3, Publ. Sec. Mat. Univ. Aut\`onoma Barcelona 28 (1984), no. 2-3, 81-87.
 		
 		\bibitem{olszak1982curvature}
 		Z. Olszak,
 		Curvature properties of quasi-Sasakian manifolds, Tensor (N.S.) 38 (1982), 19-28. 
 		
 		\bibitem{olszak1986normal}
 		Z. Olszak,
 		Normal almost contact metric manifolds of dimension $3$, Ann. Polon. Math. 47 (1986), 41-50.
 		
 		\bibitem{olszak1996conformally}
 		Z. Olszak,
 		On $3$-dimensional conformally flat quasi-Sasakian manifolds, Period. Math. Hungar. 33 (1996), no.~2, 105-113.
 		
 		\bibitem{oubina1985new}
 		J.~A. Oubi\~na,
 		New class of almost contact metric structures, Publ. Math. Debrecen 32 (1985), 187-193.
 		\bibitem{prieto2001euler} P. Royo, I. Jos\'e, The Euler class for Riemannian flows, C. R. Acad. Sci. Paris Sér. I Math. 332 (2001), no. 1, 45-50. 
 		\bibitem{rustanov1994geometry}
 		A.~R. Rustanov, On the geometry of quasi-Sasakian manifolds,  Russian Math. Surveys 49 (1994), no. 1, 243-244 
 		\bibitem{saralegui1985euler} M. Saralegui, The Euler class for flows of isometries, Res. Notes in Math., 131 (1985), 220-227. 
 	\bibitem{sasaki1962differentiable}S. Sasaki and Y. Hatakeyama, On differentiable manifolds with contact metric structures. J. Math. Soc. Japan 14 (1962), 249-271.
 	\bibitem{scott1983geometries} P. Scott,  The geometries of 3-manifolds. Bull. London Math. Soc. 15 (1983), no. 5, 401-487.
 	\bibitem{tachibana1965harmonic} S. Tachibana, On harmonic tensors in compact Sasakian spaces, Tohoku Math. J. 17 (1965), no. 2, 271-284.
 		\bibitem{tanno1971quasi}
 		S. Tanno, Quasi-Sasakian structures of rank $2p+1$, J. Differential Geometry 5 (1971), no.~3-4, 317-324.
 		
 		\bibitem{tondeur1997geometry}
 		P. Tondeur, Geometry of Foliations, Springer Lecture Notes in Mathematics, vol.~90,
 		Springer, Berlin, 1997.
 	\bibitem{wadsley1975geodesic} A. W. Wadsley, Geodesic foliations by circles, J. Differential Geometry 10 (1975), no. 4, 541-549.
 	\bibitem{Wolf2011Spaces} J. A. Wolf, Spaces of constant curvature. Sixth edition. AMS Chelsea Publishing, Providence, RI, 2011.	
 	\end{thebibliography}
 	\end{document}